\theoremstyle{definition}
\newtheorem{theorem}{Theorem}[]
\newtheorem{proposition}[theorem]{Proposition}
\newtheorem{lemma}[theorem]{Lemma}
\newtheorem{conjecture}[theorem]{Conjecture}
\numberwithin{equation}{section}
\title{Odd 4-coloring of outerplanar graphs}
\author{
  Masaki Kashima\thanks{School of Fundamental Science and Technology,
  Graduate School of Science and Technology, Keio University, Yokohama, Japan. email: masaki.kashima10@gmail.com}, 
  Xuding Zhu\thanks{School of Mathematical Sciences, Zhejiang Normal University, China. email: xdzhu@zjnu.edu.cn}
}
\begin{document}
\maketitle

\begin{abstract}
A proper $k$-coloring of $G$ is called an odd coloring of $G$ if for every vertex $v$, there is a color that appears at an odd number of neighbors of $v$.
This concept was introduced recently by Petru\v{s}evski and \v{S}krekovski, and they conjectured that every planar graph is odd 5-colorable.
Towards this conjecture, Caro, Petru\v{s}evski, and \v{S}krekovski showed that every outerplanar graph is odd 5-colorable, and this bound is tight since the cycle of length 5 is not odd 4-colorable.
Recently, the first author and others showed that every maximal outerplanar graph is odd 4-colorable.
In this paper, we show that a connected outerplanar graph $G$ is odd 4-colorable if and only if $G$ contains a block which is not a copy of the cycle of length 5.
This strengthens the result by Caro, Petru\v{s}evski, and \v{S}krekovski, and gives a complete characterization of odd 4-colorable outerplanar graphs.\\
\textbf{Keywords}: odd coloring, outerplanar graph, maximal outerplanar graph, unavoidable set
\end{abstract}

\section{Introduction}\label{sec:intro}

Throughout this paper, we only consider simple, finite, and undirected graphs.
For a positive integer $k$, let $[k]$ denote the set of positive integers at most $k$.
For a graph $G$, a $k$-coloring of $G$ is a map $\varphi$ from the vertex set $V(G)$ to the set $[k]$ such that $\varphi(u)\neq\varphi(v)$ for every edge $uv$ of $G$.
For a proper $k$-coloring $\varphi$ of $G$, a vertex $v$ of $G$ {\em satisfies the odd condition (respectively, even condition) with respect to $\varphi$} if 
$|\varphi^{-1}(i) \cap N_G(v)|$ is odd for some color $i\in [k]$ (respectively, $|\varphi^{-1}(i) \cap N_G(v)|$ is even for some color $i \in [k] \setminus \{\varphi(v)\}$). 
A $k$-coloring of a graph $G$ is called an {\em odd $k$-coloring} of $G$ if every non-isolated vertex $v$ of $G$ satisfies the odd condition with respect to $\varphi$. 
For a graph $G$, the {\em odd chromatic number}, denoted by $\chi_o(G)$, is the least integer $k$ such that $G$ admits an odd $k$-coloring.
This concept was introduced by Petru\v{s}evski and \v{S}krekovski~\cite{Petrusevski}, and has been actively studied in the literature \cite{Anderson,Caro,Cho,Liu,Petr,Wang}.
One major problem is the odd chromatic number of planar graphs. 
The following conjecture was posed by Petru\v{s}evski and \v{S}krekovski~\cite{Petrusevski}.

\begin{conjecture}[\cite{Petrusevski}]\label{conj:planar}
  Every planar graph is odd 5-colorable.
\end{conjecture}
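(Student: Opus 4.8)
The plan is to attack Conjecture~\ref{conj:planar} by the discharging method applied to a minimal counterexample. Suppose the conjecture fails, and let $G$ be a plane graph on the fewest vertices that admits no odd $5$-coloring. The first and most useful observation is a parity shortcut: for any proper coloring and any vertex $v$, the sizes of the color classes inside $N_G(v)$ sum to $\deg_G(v)$, so if $\deg_G(v)$ is odd then these sizes cannot all be even and some color appears an odd number of times among the neighbors of $v$, i.e.\ the odd condition holds automatically. Hence only \emph{even-degree} vertices can ever fail the condition, and the task reduces to producing a proper $5$-coloring in which every non-isolated even-degree vertex sees some color an odd number of times. I would invoke this repeatedly to prune the configurations that must be analyzed, and I would also establish the usual preliminaries: that $G$ is $2$-connected and has no separating triangle, and lower bounds on its minimum degree.

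Next I would assemble a catalogue of reducible configurations. The prototype is a vertex $v$ of small degree: delete $v$ (or contract a carefully chosen edge), apply minimality to obtain an odd $5$-coloring of the smaller graph, and then extend it to $v$. Extending to a vertex of degree at most $4$ always leaves at least one color free for $v$ itself; the delicate point is that coloring $v$ flips the parity of one color class in the neighborhood of each neighbor of $v$, which could destroy the odd condition at an even-degree neighbor. To control this I would choose $v$'s color among those still available so as to preserve or create an odd class at every affected neighbor, using the parity shortcut to dispose of the odd-degree neighbors for free and leaving only the even-degree neighbors to be managed by a counting argument. Configurations involving two adjacent low-degree vertices, or a low-degree vertex lying on a short face, would be handled by simultaneously recoloring a bounded set of vertices and checking the odd condition on the bounded affected region.

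With a sufficient list of reducible configurations in hand, I would run the discharging. Assign each vertex $v$ the charge $\deg_G(v)-4$ and each face $f$ the charge $\ell(f)-4$, where $\ell(f)$ denotes the length of $f$; by Euler's formula the total charge is $-8$. I would then design local redistribution rules that send charge from high-degree vertices and long faces toward low-degree vertices and triangles, and argue that, once every reducible configuration has been excluded, each vertex and each face ends with nonnegative charge, contradicting the negative total. Tuning the rules so that the only structures surviving with negative final charge are exactly the ones already shown to be reducible is where the bulk of the casework lies.

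The hard part, and the reason this conjecture remains open, is the interaction between reducibility and the parity-based odd condition. Unlike ordinary proper coloring, the extension of a partial coloring here is nonmonotone: adding a single vertex can simultaneously repair one neighbor's odd condition and break another's, because each newly colored vertex toggles a parity in every neighborhood it meets. Proving that a configuration is reducible therefore demands a global parity-balancing argument over the whole affected region rather than a purely local choice, and with only five colors the available slack is thin. I expect the genuine obstacle to be assembling a family of configurations that is at once reducible under this fragile extension process and \emph{unavoidable} in every plane graph via discharging; reconciling those two demands against the parity constraint is the crux.
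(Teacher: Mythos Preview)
This statement is presented in the paper as an \emph{open conjecture}, not as a theorem; the paper offers no proof of it whatsoever. The surrounding text explicitly records only partial progress by others (the current best bound for planar graphs being $8$, due to Petr and Portier), and the paper's own contribution concerns outerplanar graphs, not the planar conjecture. So there is no ``paper's own proof'' against which to compare your proposal.

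As for the proposal itself: it is not a proof but a research plan, and you acknowledge as much in the final paragraph. The parity observation (odd-degree vertices automatically satisfy the odd condition) is correct and standard, and the minimal-counterexample-plus-discharging framework is indeed the natural line of attack. But the substance of any such proof lies entirely in the two ingredients you have left unspecified: an explicit list of reducible configurations together with verified extension arguments for each, and explicit discharging rules together with a verification that every surviving object has nonnegative final charge. You provide neither; you describe only the \emph{shape} such arguments would take. Your own closing paragraph correctly identifies why filling this in is hard---the nonmonotonicity of the odd condition under extension---and that is precisely the obstruction that has kept the conjecture open. In short, the proposal contains no error because it makes no concrete claim; the gap is that it is an outline of an approach known to be difficult, not a proof.
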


If Conjecture \ref{conj:planar} is true, then the bound is best possible since the cycle of length 5 is not odd 4-colorable.
Towards this conjecture, Petru\v{s}evski and \v{S}krekovski~\cite{Petrusevski} showed that every planar graph is odd 9-colorable, and Petr and Portier~\cite{Petr} improved the bound to 8.
For an integer $k\geq 4$, let $\mathcal{P}_k$ denote the family of planar graphs with girth at least $k$.
Cho et al.~\cite{Cho} showed that $\chi_o(G)\leq 6$ for every graph $G\in \mathcal{P}_5$, and that $\chi_o(G)\leq 4$ for every graph $G\in\mathcal{P}_{11}$.
The latter result is improved recently by Anderson et al.~\cite{Anderson}, where they showed that $\chi_o(G)\leq 4$ for every $G\in \mathcal{P}_{10}$.

In this paper, we focus on outerplanar graphs.
An outerplanar graph is a graph that can be embedded in the plane so that all vertices lie in the boundary of the outer face. An outerplanar graph $G$ is called a maximal outerplanar graph if for any nonadjacent vertices $u$ and $v$ of $G$, $G+uv$ is not an outerplanar graph.
Caro et al.~\cite{Caro} showed the following theorem.

\begin{theorem}[\cite{Caro}]\label{thm:outerplanar}
    Every outerplanar graph is odd 5-colorable.
\end{theorem}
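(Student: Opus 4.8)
The plan is to argue by induction on $|V(G)|+|E(G)|$. The components of $G$ may be colored independently, so assume $G$ is connected, and a graph with no edge is colored arbitrarily, so assume $G$ has at least one edge. The engine is a short unavoidable set of reducible configurations: I claim that every connected outerplanar graph with an edge contains (A) a vertex of degree at most $1$, or (B) a vertex of degree $2$ lying on a triangle, or (C) two adjacent vertices each of degree $2$. Since outerplanar graphs have minimum degree at most $2$, it suffices to prove this when $\delta(G)=2$; then I would take a leaf block $B$ of the block-cut tree (with $B=G$ if $G$ is $2$-connected), note that $B$ is $2$-connected with at least three vertices because a $K_2$ leaf block would force a degree-$1$ vertex, and look at the weak dual tree of $B$. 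If $B$ is a cycle then, using that it has at most one cut vertex, we read off (B) when $B$ is a triangle and (C) when $B$ is longer. Otherwise the weak dual has at least two leaves; a leaf of it is an inner face $F$ sharing exactly one edge with the rest of $B$, and every vertex of $F$ off that edge has degree $2$ in $B$. Since a degree-$2$ vertex of an outerplanar graph lies on exactly one inner face, the unique cut vertex of $B$ can be private to at most one such leaf face, so some leaf face has all of its private vertices of degree $2$ in $G$: this face gives (B) if it is a triangle and (C) if it is longer.

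For reducibility I delete the one or two low-degree vertices of the configuration, apply the induction hypothesis to the remaining graph $G'$ — which for each of (A), (B), (C) is again outerplanar and connected — to get an odd $5$-coloring $c'$, and extend it. The bookkeeping fact I rely on is that if a vertex $w$ satisfies the odd condition in $G'$ and we attach one new neighbor to $w$, then at most one color is forbidden on that neighbor in order to keep $w$ satisfied — namely the color appearing an odd number of times in $N_{G'}(w)$, when there is exactly one such color. In case (A), a pendant vertex $v$ with neighbor $u$, the color of $v$ need only avoid $c'(u)$ and $u$'s forbidden color, leaving at least three choices, and $v$ itself satisfies the odd condition automatically since it has one neighbor. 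In cases (B) and (C) I color the deleted vertices one at a time, and checking properness, the deleted vertices' own odd conditions (which hold as soon as their two neighbors get distinct colors, which can be arranged), and the at most one forbidden color at each surviving neighbor of a deleted vertex, each deleted vertex turns out to need to avoid at most four colors; with five colors a legal choice always remains.

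The argument is not hard, and the main obstacle is really making two routine-looking points airtight. The first is the unavoidability claim, which rests on the weak-dual bookkeeping above — especially the observation that a degree-$2$ vertex lies on exactly one inner face, so a single cut vertex cannot be private to all the leaf faces of a block — and which also needs the check that deleting the chosen configuration keeps $G'$ connected with every surviving neighbor of a deleted vertex non-isolated, so that the induction hypothesis really certifies that those vertices satisfy the odd condition in $G'$. The second is that the reducibility count in (B) and (C) is exactly tight: it is the fifth color that guarantees a vertex forbidden at most four colors still has a choice, and with only four colors the count collapses — precisely matching the fact noted in the introduction that $C_5$ is not odd $4$-colorable, which is what would block any attempt to lower the bound by this route.
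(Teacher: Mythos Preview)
The paper does not supply its own proof of this theorem: it is quoted from Caro, Petru\v{s}evski, and \v{S}krekovski and used only as background for the paper's main result on odd $4$-coloring. So there is nothing in the paper to compare your argument against.

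That said, your proof is correct. The unavoidability claim is sound: in a leaf block $B$ that is not a cycle, a leaf of the weak dual gives an inner face whose non-chord vertices have degree $2$ in $B$, and your remark that a degree-$2$ vertex of $B$ lies on exactly one inner face correctly handles the only way the (at most one) cut vertex of $B$ could spoil every leaf face. The reducibility counts are also right. In case (B) the single deleted vertex $v$ must avoid $c(x)$, $c(y)$, and at most one ``odd-preserving'' color at each of $x$ and $y$, giving at most four forbidden colors; and $v$'s own odd condition is automatic since $c(x)\neq c(y)$. In case (C), coloring $w$ first with $c(w)\notin\{c(x),c(y),\text{forbidden at }y\}$ and then $u$ with $c(u)\notin\{c(x),c(w),c(y),\text{forbidden at }x\}$ leaves at most three and four forbidden colors respectively, so five colors suffice. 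Your connectivity check for $G'$ in case (C) also goes through because the two deleted vertices are consecutive on the outer cycle of a $2$-connected block, so removing them leaves a path through the remaining vertices.

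The paper's own machinery---ears, ear chains, ear double chains, and the color-exchange lemma---is aimed at the much more delicate $4$-color statement and is considerably heavier than what is needed here; your direct minimum-degree reduction is the appropriate tool for the $5$-color bound.
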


As $C_5$ is an outerplanar graph,   the bound 5 is best possible, where $C_5$ is the cycle of length 5.
On the other hand,  the following theorem was proved in \cite{Kashima}.

\begin{theorem}[\cite{Kashima}]\label{thm:maximalouterplanar}
    For every maximal outerplanar graph $G$ and every list assignment $L: V(G)\to 2^{\mathbb{N}}$, if $|L(v)|\geq 4$ for any vertex $v$ of $G$, then $G$ admits an odd coloring $\varphi$ such that $\varphi(v)\in L(v)$ for every vertex $v$ of $G$.
    In particular, every maximal outerplanar graph is odd 4-colorable.
\end{theorem}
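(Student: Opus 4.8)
The plan is to prove the statement by induction on $n=|V(G)|$, working with the list-colouring formulation throughout (the final assertion then follows by taking every list equal to $[4]$). The base cases are the maximal outerplanar graphs on at most four vertices, namely $K_3$ and $K_4$ minus an edge together with the trivial $K_1$ and $K_2$; for these, every proper $L$-colouring with all lists of size at least $4$ is already odd, since each vertex has degree at most $1$, or has degree $2$ with its two neighbours adjacent, or has degree $3$.

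For the inductive step I would first isolate where the odd condition can fail, via two elementary facts. In any proper colouring a vertex $v$ of odd degree automatically satisfies the odd condition, because the colour multiplicities on $N_G(v)$ are nonnegative integers summing to the odd number $\deg_G(v)$, so one of them is odd; and every degree-$2$ vertex of a maximal outerplanar graph on at least three vertices has its two neighbours adjacent, hence differently coloured, hence satisfies the odd condition. Thus only vertices of even degree at least $4$ can ever fail the odd condition, and the reductions are designed to control exactly these. A third, purely bookkeeping tool: for a vertex $u$ write $O_\varphi(u)$ for the set of colours appearing an odd number of times on $N(u)$; deleting or adding a single neighbour of $u$ flips exactly one membership in $O_\varphi(u)$, and $u$ satisfies the odd condition iff $O_\varphi(u)\neq\emptyset$.

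Next I would exhibit a small unavoidable family of reducible configurations, described through the weak dual tree $T$ of $G$ (nodes are the triangles, edges are the shared chords; $T$ is a tree of maximum degree $3$ whose leaves correspond to the ears of $G$). Rooting $T$ and looking at a deepest leaf $\ell$ and its parent $p$: if $p$ has a child $\ell'\neq\ell$, then $\ell'$ is also a leaf, and the two leaf-triangles share an edge with $p$'s triangle, forcing two ears of $G$ to have a common neighbour of degree $4$; otherwise $p$ has a unique child and a parent, forcing a base vertex of $\ell$'s triangle to have degree $3$. So every maximal outerplanar graph on at least five vertices contains one of: (a) an ear $v$ whose base triangle $vab$ has a base vertex $a$ with $N_G(a)=\{v,b,c\}$ and $abc$ a triangle; (b) two ears $v,w$ with a common neighbour $x$ of degree $4$, so $N_G(x)=\{v,w,y,z\}$ with triangles $vxy$, $xyz$, $xzw$. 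For (a) I would delete $\{v,a\}$, leaving a maximal outerplanar graph on $n-2\geq 3$ vertices, colour it by induction, then restore $a$ (degree $3$, hence fine) and $v$ (a degree-$2$ vertex, hence fine); only $b$ and $c$ have their neighbourhoods changed, and a short count using $|L(a)|,|L(v)|\geq 4$ shows one can choose $\varphi(a)$ and then $\varphi(v)$ avoiding the $O(1)$ forbidden colours and colour-pairs imposed by $O_\varphi(b)$ and $O_\varphi(c)$. For (b) I would delete $\{v,w,x\}$ (equivalently, delete the two ears, after which $x$ itself becomes a degree-$2$ vertex of the remaining graph), colour the remainder by induction, and restore $x,v,w$: here $v,w$ are fine as degree-$2$ vertices, only $y$ and $z$ have their neighbourhoods changed, and the degree-$4$ vertex $x$ satisfies the odd condition as soon as $\{\varphi(v),\varphi(w)\}\neq\{\varphi(y),\varphi(z)\}$, which can in particular be ensured by choosing $\varphi(v)\notin\{\varphi(y),\varphi(z)\}$.

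I expect configuration (b) to be the main obstacle. Deleting ears perturbs the graph so little that the extension step is \emph{tight}: after the exclusions forced by properness, by keeping $y$ and $z$ satisfying the odd condition, and by keeping $x$ satisfying it, the number of forbidden colours for the last of $x,v,w$ to be coloured can reach the list size $4$, so a naive greedy extension need not go through. Handling this cleanly is the crux. The way I would do it is to permit a bounded recolouring near the deleted part — recolour one of $y,z$ (which become the endpoints of a boundary edge of the reduced graph) so as to change $O_\varphi(y)$ or $O_\varphi(z)$ and thereby loosen the tight constraint — and then verify that this single recolouring never propagates further; an alternative would be to strengthen the induction hypothesis to allow a precoloured outer edge in Thomassen's style. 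Once the unavoidable family and the (easy) reducibility of (a) are in place, the remaining work is bookkeeping.
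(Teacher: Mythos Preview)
The paper does not prove Theorem~\ref{thm:maximalouterplanar}; it is quoted from \cite{Kashima} as a known result and used only as background, so there is no in-paper argument to compare your attempt against.

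Assessed on its own, your structural setup is sound: the weak-dual analysis really does force one of your configurations (a) or (b) in every maximal outerplanar graph on at least five vertices, and your reduction for (a) goes through with the counting you sketch (at most three colours are ever forbidden for $\varphi(a)$ and then for $\varphi(v)$). The genuine gap is exactly where you locate it, in (b), and neither of your two suggested repairs is yet a proof. The recolouring idea is unsafe as stated: in the bad case $y$ has even degree at least $4$ in $G$, hence at least two neighbours in the reduced graph, so changing $\varphi(y)$ toggles two colours in $O_\varphi(u)$ for every such neighbour $u$; any $u$ with $|O_\varphi(u)|=2$ and $O_\varphi(u)=\{\varphi(y),\varphi'(y)\}$ then loses its odd condition, and there is no a priori reason this cannot happen. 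So ``never propagates further'' needs an argument you have not given. The Thomassen-style strengthening (carry a precoloured boundary edge, or more generally an extendability condition at a designated outer edge, through the induction) is the viable route, but it changes the inductive statement, and both the unavoidability step and the reductions must be rechecked under the stronger hypothesis before the argument is complete. In short: correct diagnosis of the crux, but the resolution is still owed.
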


Note that deleting edges can increase the odd chromatic number of a graph. One natural question is which outerplanar graphs are odd 4-colorable.
This paper answers this question, and characterizes the family of odd 4-colorable outerplanar graphs.

\begin{theorem}\label{thm:main1}
    A connected outerplanar graph $G$ is odd 4-colorable if and only if $G$ has a block which is not a copy of $C_5$.
\end{theorem}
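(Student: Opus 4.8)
I would prove the two implications separately.

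\emph{The ``only if'' direction.} I prove the contrapositive: if every block of a connected outerplanar graph $G$ is a copy of $C_5$, then $G$ is not odd $4$-colorable. I argue by induction on the number of blocks, the base case $G=C_5$ being the known fact that the $5$-cycle is not odd $4$-colorable. In the inductive step, suppose $\varphi$ is an odd $4$-coloring of such a $G$ with at least two blocks, let $B=v_1v_2v_3v_4v_5$ be a leaf block of the block--cut tree, and let $v_1$ be its cut vertex. Each of $v_2,v_3,v_4,v_5$ has degree $2$ in $G$, so the odd condition forces each of them to have two differently colored neighbors; chasing these four constraints around $B$ together with the properness of $\varphi$ forces $\varphi(v_2)=\varphi(v_5)$, with $\varphi(v_1)$ equal to the one remaining color. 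Hence the two $B$-neighbors of $v_1$ contribute the monochromatic pair $\{\varphi(v_2),\varphi(v_2)\}$ to $N_G(v_1)$, so deleting $v_2,v_3,v_4,v_5$ changes no parity at $v_1$ and leaves $\varphi$ an odd $4$-coloring of a connected graph with strictly fewer blocks, each still a copy of $C_5$ --- contradicting the induction hypothesis.

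\emph{The ``if'' direction.} The same computation read in reverse shows that a $C_5$ block is ``rigid but harmless'': if $B=c\,v_2v_3v_4v_5$ is a block whose vertex $c$ has already been colored, then setting $\varphi(v_2)=\varphi(v_5)=a$ for any $a\neq\varphi(c)$ and letting $\{\varphi(v_3),\varphi(v_4)\}=[4]\setminus\{\varphi(c),a\}$ gives every vertex of $B$ other than $c$ two differently colored $B$-neighbors --- so each satisfies the odd condition within $B$ --- while the colors added to $N(c)$ form a monochromatic, hence parity-neutral, pair. To exploit this, fix a block $B_0$ of $G$ that is not a copy of $C_5$, root the block--cut tree at $B_0$, and color the blocks in a top-down order. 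Give $B_0$ any odd $4$-coloring (this exists by the lemma below, or trivially when $B_0$ is a single edge), and for each cut vertex $c$ of $G$ lying in $B_0$ record a ``witness'' color $\gamma_c$ occurring an odd number of times on $N_{B_0}(c)$. For every other block $B$, whose parent cut vertex $c_B$ is already colored, extend the coloring to $V(B)\setminus\{c_B\}$ so that (i) every vertex of $V(B)\setminus\{c_B\}$ satisfies the odd condition within $B$, and (ii) the colors added to $N(c_B)$ occur an even number of times in color $\gamma_{c_B}$; then record, for each cut vertex $c'\neq c_B$ lying in $B$, a witness $\gamma_{c'}$ occurring an odd number of times on $N_B(c')$. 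Since the blocks containing a cut vertex $c$ are exactly its parent block, in which $\gamma_c$ is odd, and its child blocks, in which $\gamma_c$ is even, a short check shows the resulting $4$-coloring of $G$ is odd. Establishing (i)--(ii) for a $C_5$ block is precisely the rigidity observation above (its added pair is monochromatic, hence even in every color), and for a single-edge block it is immediate --- color the new endpoint with a color different from $\varphi(c_B)$ and from $\gamma_{c_B}$.

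\emph{The main obstacle.} What remains --- and the step I expect to be the real difficulty --- is a block lemma for the $2$-connected case: if $B\neq C_5$ is a $2$-connected outerplanar graph, $v_0\in V(B)$, $L$ a list assignment with $|L(v)|\geq 4$ for $v\neq v_0$ and $|L(v_0)|\geq 1$ (so $v_0$ may be precolored), and $\gamma$ a color other than the color of $v_0$, then $B$ has an $L$-coloring in which every vertex other than $v_0$ satisfies the odd condition within $B$ and $\gamma$ occurs an even number of times on $N_B(v_0)$; and, when $|L(v_0)|\geq 4$, one can in addition make $v_0$ itself satisfy the odd condition within $B$, which supplies the odd $4$-colorings of the root block used above. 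I would prove this by induction on the number of internal faces of $B$: when $B$ has a chord, peel off an ear --- a leaf face of the weak dual, whose interior is a path of degree-$2$ vertices of $B$ --- apply the induction hypothesis to the smaller $2$-connected outerplanar graph obtained by deleting the ear's interior, and then recolor the ear; when $B$ has no chord it is a cycle $C_m$ with $m\neq 5$, handled directly (the odd condition on a cycle amounts to a proper coloring of its square, and for $m\neq 5$ four colors suffice even with a precolored vertex and the parity constraint at $v_0$). The delicate points are the configurations in which the graph left after peeling would be a copy of $C_5$ (so the induction hypothesis is unavailable and a bounded number of small cases must be treated by hand) and the bookkeeping needed to carry exactly one exceptional vertex and one prescribed even-parity color through the induction; this finite list of reducible small configurations is presumably the ``unavoidable set'' advertised in the abstract.
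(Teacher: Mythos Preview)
Your ``only if'' direction and the block-decomposition scaffolding for the ``if'' direction are correct and close in spirit to the paper's Proposition~\ref{prop:c5family} and cut-vertex reduction. The block lemma you isolate is also a true statement (it follows from the paper's Theorem~\ref{thm:main2} after a color permutation). The gap is in your proposed \emph{proof} of that lemma.

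When you peel an ear $H$ with root edge $u_1u_r$ and apply induction to $B' = B - (V(H)\setminus\{u_1,u_r\})$, both $u_1$ and $u_r$ gain one new neighbor in the extension step, and you must preserve the odd condition at \emph{both}. For a triangular ear ($r=3$), the single interior vertex $u_2$ must avoid $\varphi(u_1)$, $\varphi(u_r)$, the odd-witness of $u_1$ in $B'$, and the odd-witness of $u_r$ in $B'$; when these four colors are pairwise distinct no choice remains, and analogous obstructions persist for $r=4$. This is not a ``$B'\simeq C_5$'' pathology and is not confined to a finite list of small configurations: it occurs whenever every leaf face of the weak dual is a triangle whose root-edge endpoints both have degree $\ge 4$ in $B$, and such $B$ can be arbitrarily large. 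Your inductive hypothesis carries only \emph{one} exceptional vertex, so it cannot absorb the second endpoint.

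The paper's unavoidable set is built exactly to sidestep this. It shows that one can always find either an ear, or failing that an \emph{ear chain} (a fan of ears over a path of chords), or failing that an \emph{ear double chain}, that is ``good for $v$'': one endpoint of the peeled structure has degree at most $3$, $5$, or $6$ respectively in $G$, small enough that its odd condition comes for free or can be forced by the tailored extensions in Lemmas~\ref{lem:extension.ear}--\ref{lem:extension.earchain} together with the color-exchanging Lemma~\ref{lem:colorexchange}. So the unavoidable set is not a finite catalogue of small reducible subgraphs as you guessed, but three structural \emph{types} of arbitrarily large peelable pieces that guarantee a low-degree endpoint on the side away from the root; your single-ear induction would need to be upgraded to something of this kind.
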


For the purpose of using induction,  we prove a slightly stronger statement.
Assume $G$ is a graph and $v$ is a vertex of $G$. We denote by $(G,v)$ the {\em rooted graph} with $v$ be the {\rm root vertex} of $G$. 
An odd $k$-coloring of $(G,v)$ is an odd $k$-coloring $\varphi$ of $G$ such that vertex $v$ satisfies both the odd condition and the even condition.
Note that if $d_G(v)=2$, then any odd 4-coloring of $G$ is an odd 4-coloring of $(G,v)$, as there is a color $i\in [4]\setminus \{\varphi(v)\}$ such that  $|\varphi^{-1}(i)\cap N_G(v)|=0$.

\begin{theorem}\label{thm:main2}
    A connected rooted outerplanar graph $(G,v)$ admits an odd 4-coloring if and only if $G$ has a block which is not a copy of $C_5$.
\end{theorem}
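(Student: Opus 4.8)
The plan is to prove Theorem~\ref{thm:main2} by induction on $|V(G)|$, exploiting the block structure of the connected outerplanar graph $(G,v)$. The forward direction (necessity) is easy: if every block of $G$ is a copy of $C_5$, then $G$ is $C_5$-like everywhere, and one checks directly (a short parity argument on a single $C_5$, extended across cut vertices) that no odd $4$-coloring of $G$ exists; indeed in any $4$-coloring of $C_5$ some color class has size $2$ and the remaining three colors appear on a path of length $3$, forcing a vertex whose two neighbors get distinct colors and the third color absent, so only even counts occur at that vertex. The real content is the converse.

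For sufficiency, assume $G$ has a block $B_0$ that is not a $C_5$. I would set up the induction so that the root $v$ is chosen conveniently; using the observation in the excerpt that a degree-$2$ root is automatically fine, the ``rooted'' strengthening lets us glue colorings at cut vertices. The key structural step is to look at the block-cut tree of $G$ and peel off a leaf block $B$ (one containing at most one cut vertex $c$, or, if $G$ is $2$-connected, $B=G$ itself). There are two regimes. If the leaf block $B$ is a copy of $C_5$, we cannot color it in isolation, so instead we contract/delete it down to its cut vertex $c$, color the smaller graph $G'=G-(V(B)\setminus\{c\})$ by induction (noting $G'$ still contains the good block $B_0$, and is nonempty/connected), then extend the coloring across $B\cong C_5$: here the root condition at $c$ inside $G'$ plus the two free colors available give enough room to $4$-color the five vertices of $B$ so that all four interior-to-$B$ vertices satisfy the odd condition and $c$ retains an odd color class --- this is the one-block extension lemma I would isolate as a sublemma. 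If instead $B$ is \emph{not} a $C_5$, I would aim to color $B$ itself first (using Theorem~\ref{thm:maximalouterplanar}-style ideas, or directly, since a non-$C_5$ block of an outerplanar graph is either small or contains a suitable reducible configuration --- a vertex of degree $\le 2$, two adjacent degree-$2$ vertices, or a triangle with a degree-$2$ tip), treating $c$ as the root of $B$; then recurse on the components of $G-(V(B)\setminus\{c\})$ hanging off $c$, each rooted at $c$, and merge.

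Carrying this out cleanly requires a careful case analysis of the possible ``leaf'' situations relative to where the good block $B_0$ sits: the subtle case is when \emph{every} leaf block is a $C_5$ and $B_0$ is in the interior of the block-cut tree. Then I would root at a cut vertex adjacent to $B_0$, peel $C_5$ leaf blocks one at a time from the far side inward via the extension sublemma, shrinking $G$ until $B_0$ becomes reachable, and only then color $B_0$ directly with its cut vertices as roots and propagate outward. The bookkeeping is in guaranteeing at each peel that (i) the remaining graph is still connected and contains a non-$C_5$ block, and (ii) the rooted odd-coloring hypothesis is exactly what the extension sublemma consumes at the shared cut vertex --- the free color at a degree-constrained cut vertex is what makes the $C_5$ attachable.

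The main obstacle I anticipate is the $C_5$-extension sublemma: proving that whenever $(G',c)$ has an odd $4$-coloring and we attach a new $C_5$ along $c$ (so $c$ gains two new neighbors of degree $2$ or $3$ in the combined graph), the coloring extends to an odd $4$-coloring of the whole, \emph{and} that the parity obstruction which kills a standalone $C_5$ is exactly neutralized by the presence of the already-colored neighbors of $c$ in $G'$. This needs a concrete $4$-coloring of the five new vertices written down in terms of $\varphi(c)$ and the (odd) color class at $c$, checked by hand for the handful of configurations --- tedious but not deep. Everything else (block-cut tree peeling, connectivity bookkeeping, the easy necessity direction) is routine once that sublemma is in place.
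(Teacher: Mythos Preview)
Your block--cut-tree reduction and the $C_5$-extension sublemma are both sound; indeed the paper handles the cut-vertex case in a similar spirit, and the extension across a pendant $C_5$ is essentially Proposition~\ref{prop:c5family} read backwards (the two new neighbors of the cut vertex are forced to receive the same color, so all parities at $c$ are preserved). But you have misidentified the main obstacle. The heart of the theorem is the $2$-connected case --- producing an odd $4$-coloring of a single $2$-connected outerplanar block $B\not\cong C_5$ in which a prescribed vertex satisfies both parity conditions --- and you wave this away with ``Theorem~\ref{thm:maximalouterplanar}-style ideas'' and a list of configurations (a degree-$2$ vertex, two adjacent degree-$2$ vertices, a triangle with a degree-$2$ tip). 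None of these is reducible for \emph{odd} coloring in the way you suggest: deleting a degree-$2$ vertex $u$ and re-inserting it flips the parity of one color class at each of its two neighbors, and with only two admissible colors for $u$ you cannot in general avoid destroying the odd condition at one of them (let alone preserve the even condition if the root happens to sit there). Theorem~\ref{thm:maximalouterplanar} itself concerns only \emph{maximal} outerplanar graphs, and adding chords to make $B$ maximal will in general break the odd condition when those chords are removed again.

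The paper spends almost all of its effort on precisely this $2$-connected case. It first proves a structural lemma (Lemma~\ref{lem:unavoidable}) showing that every $2$-connected outerplanar graph which is not a cycle contains one of three nested configurations --- an ear, an ear chain, or an ear double chain --- positioned away from the root, and then reduces each of these in turn (Cases~1--4 of Section~\ref{sec:proof}). The ear-double-chain reduction additionally requires a color-exchanging lemma (Lemma~\ref{lem:colorexchange}) to manufacture the even condition at the attachment vertex before extending. Your proposal contains nothing that plays the role of this machinery, and without it the induction has no base case.
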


It is obvious that Theorem \ref{thm:main2} implies Theorem \ref{thm:main1}.
It was proved in \cite{Caro2} if each block of a graph $G$ is isomorphic to $C_5$, then $G$ is  not proper conflict-free 4-colorable. The same argument shows that $G$ is not odd 4-colorable. On the other hand, such a graph is ``almost" odd 4-colorable in the sense that only one vertex does not satisfy the odd condition. 

\begin{proposition}\label{prop:c5family} 
    If $G$ is a connected graph and every block of $G$ is isomorphic to $C_5$, then $G$ is not odd 4-colorable. On the other hand, for each vertex $v$ of $G$, there is a proper $4$-coloring $\varphi$ of $G$ such that any vertex $u \ne v$ satisfies the odd condition and $v$ satisfies the even condition with respect to $\varphi$.
\end{proposition}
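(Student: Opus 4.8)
The plan is to prove both halves at once by induction on the number of blocks of $G$, with base case $G=C_5$. The engine of the proof is the following trivial remark: if $v$ has exactly two neighbors $x,y$ and $\varphi$ is a proper $4$-coloring, then $v$ satisfies the odd condition if and only if $\varphi(x)\neq\varphi(y)$, and satisfies the even condition if and only if $\varphi(x)=\varphi(y)$ (in the latter case the repeated color automatically differs from $\varphi(v)$). In the graphs under consideration the degree-$2$ vertices are precisely the non-cut vertices of the $C_5$-blocks --- every cut vertex has degree at least $4$ --- so this remark governs all of them.

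\emph{Base case.} Here $G=C_5$; label its vertices $w_1,\dots,w_5$ in cyclic order. If an odd $4$-coloring existed, the remark would force every pair of vertices at distance $2$ to receive distinct colors, which together with properness would make all five vertices pairwise differently colored, impossible with four colors. For the ``almost'' coloring, by vertex-transitivity we may take the root to be $w_1$; put $\varphi(w_i)=i$ for $i\le 4$ and $\varphi(w_5)=2$. Then each of $w_2,\dots,w_5$ sees two distinct colors among its neighbors while $w_1$ sees color $2$ twice, so $w_1$ satisfies the even but not the odd condition.

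\emph{Inductive step.} Suppose $G$ has at least two blocks, hence at least two leaf blocks in its block-cut tree. Pick a leaf block $B\cong C_5$ with cut vertex $c$ and label its vertices $c,v_1,v_2,v_3,v_4$ in cyclic order; then $v_1,\dots,v_4$ have degree $2$ and are adjacent only inside $B$. Let $G'=G-\{v_1,v_2,v_3,v_4\}$: it is connected, its blocks are the blocks of $G$ other than $B$ (so all isomorphic to $C_5$), and $c$ is non-isolated in $G'$. For the non-colorability half, given an odd $4$-coloring $\varphi$ of $G$, applying the remark to $v_1,\dots,v_4$ and using properness forces $\varphi(c)$ to differ from all of $\varphi(v_1),\dots,\varphi(v_4)$, forces $\varphi(v_1),\varphi(v_2),\varphi(v_3)$ to exhaust the three colors other than $\varphi(c)$, and forces $\varphi(v_4)=\varphi(v_1)$. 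Since $v_1$ and $v_4$ then receive the same color, deleting them changes no color-count parity in $N(c)$, so $\varphi|_{G'}$ is an odd $4$-coloring of $G'$, contradicting the induction hypothesis. For the ``almost'' half with root $v$: since there are at least two leaf blocks while at most one block has $v$ as a non-cut vertex, we may choose $B$ so that $v\notin\{v_1,\dots,v_4\}$, whence $v\in V(G')$ (possibly $v=c$). Take, by induction, a proper $4$-coloring $\psi$ of $G'$ in which $v$ meets the even condition and every other vertex meets the odd condition, and extend it by $\varphi(v_1)=\varphi(v_4)=a$, $\varphi(v_2)=b$, $\varphi(v_3)=c'$ where $\{a,b,c'\}=[4]\setminus\{\psi(c)\}$. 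One then checks that $\varphi$ is proper, that $v_1,\dots,v_4$ meet the odd condition, that every vertex of $V(G')\setminus\{c\}$ keeps its neighborhood and colors and hence its status, and that the equal colors on $v_1,v_4$ leave all color-count parities in $N(c)$ untouched --- so $c$ still meets the odd condition if $c\neq v$, and $v$ still meets the even condition if $c=v$. This closes the induction.

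\emph{Expected obstacle.} Apart from the routine verifications, the one point that deserves genuine care is the local computation inside a leaf block showing that any odd $4$-coloring assigns $v_1$ and $v_4$ the same color; this ``parity-transparency'' of a leaf block at its cut vertex is exactly what makes both inductions close, and the even/odd color-count bookkeeping at $c$ --- in particular in the subcase $c=v$ --- is where precision is needed. For the non-colorability half one may alternatively quote \cite{Caro2}, whose proof that such graphs are not proper conflict-free $4$-colorable applies verbatim.
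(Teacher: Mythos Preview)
Your proof is correct and follows essentially the same approach as the paper's: induction on the number of blocks, peeling off a leaf $C_5$-block whose non-cut vertices avoid the root $v$, showing that in any odd $4$-coloring the two neighbors of the cut vertex inside that block receive the same color (so restriction to $G'$ contradicts induction), and extending an ``almost'' coloring of $G'$ by coloring those two neighbors identically so all parities at the cut vertex are preserved. One minor misstatement to fix: your ``trivial remark'' claims a degree-$2$ vertex satisfies the even condition \emph{if and only if} its two neighbors share a color, but under the paper's definition a degree-$2$ vertex in a proper $4$-coloring \emph{always} satisfies the even condition (some color in $[4]\setminus\{\varphi(v)\}$ appears zero times); fortunately you never use the false ``only if'' direction, so the argument stands.
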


\begin{proof}
    The proof goes by induction on the number of blocks of $G$.
    If $G$ has only one block, then $G =(v_1v_2v_3v_4v_5)$ is a  cycle of length 5, and thus $G$ is not odd 4-colorable. On the other hand, suppose that $v=v_4$ by symmetry, and let $\varphi(v_j)=j$ for $j\in\{1,2,3,4\}$, and $\varphi(v_5)=3$.
    Then for each $j \ne 4$, $|\varphi^{-1}(i) \cap N_G(v_j)|$ is odd for some color $i$. 
    The vertex $v_4$ satisfies the even condition since $|\varphi^{-1}(1)\cap N_G(v_4)|=0$.

    Suppose that $G$ has at least 2 blocks, and $v$ is a vertex of $G$. 
    Let $B=v_1v_2v_3v_4v_5$ be a leaf block of $G$ containing a cut vertex $v_1$ of $G$, and $v \ne v_i$ for $i\in\{2,3,4,5\}$. Let $G'=G-\{v_2,v_3,v_4,v_5\}$. 
  
    If $\varphi$ is an odd $4$-coloring of $G$, then for each $i \in \{2,3,4,5\}$, the two neighbors of $v_i$ are colored by distinct colors.  This implies that $\varphi(v_2)=\varphi(v_5)$. Hence the restriction of $\varphi$ to $G'$ is an odd 4-coloring of $G'$, a contradiction. 

    On the other hand, there is a proper 4-coloring $\varphi$ of $G'$ such that any vertex $u \neq v$ satisfies the odd condition and $v$ satisfies the even condition. Assume $\varphi(v_1)=1$. Extend $\varphi$ to $G$ by letting $\varphi(v_i)=i$ for $i\in\{2,3,4\}$ and $\varphi(v_5)=2$, it is easy to check that any vertex $u \neq v$ satisfy the odd condition and the vertex $v$ still satisfies the even condition.
\end{proof}

\section{Color exchanging lemma}\label{sec:lemma}

\begin{lemma}\label{lem:colorexchange}
    Let $G$ be an outerplane graph, and let $v$ be a vertex of $G$.
    Let $xy$ be an edge on the boundary of the outer face which is not a cut edge of $G$.
    If $G$ admits an odd 4-coloring such that $v$ satisfies the even condition, then there exists an odd 4-coloring $\varphi$ of $G$ such that the vertex $v$ and at least one of $\{x, y\}$ satisfy the even condition with respect to $\varphi$.
\end{lemma}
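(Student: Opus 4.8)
The plan is to reduce the lemma to one essential case and then repair that case by a local recolouring. Write $\psi$ for the given odd $4$-colouring of $G$, under which $v$ satisfies the even condition, and note that in any proper $4$-colouring a vertex of degree at most $2$ automatically satisfies the even condition, since one of the three colours different from its own is absent from its neighbourhood. I would first dispose of the easy cases: if $v\in\{x,y\}$, or if $\min\{d_G(x),d_G(y)\}\le 2$, or if $x$ or $y$ already satisfies the even condition under $\psi$, then $\varphi=\psi$ works. Hence we may assume $v\notin\{x,y\}$, that $d_G(x),d_G(y)\ge 3$, and that neither $x$ nor $y$ satisfies the even condition with respect to $\psi$; for a vertex of degree at least $3$ in a $4$-colouring, the latter means precisely that the three colours different from its own colour each occur an \emph{odd} (hence positive) number of times in its neighbourhood.

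Since $xy$ is not a cut edge, the face of the outerplane embedding on the other side of $xy$ is bounded by a cycle $C=x\,u_1\,u_2\cdots u_r\,y\,x$ with $r\ge 1$, and $C$ has no chord, so $G$ is obtained from $C$ by attaching pairwise vertex-disjoint (apart from $C$) branches at single vertices of $C$. I would try to give $x$ the even condition by recolouring a single neighbour $z$ of $x$ — most naturally $z=u_1$, the neighbour of $x$ on $C$ other than $y$ — to a colour $c^\ast\notin\psi(N_G(z))$, and if a single recolouring turns out not to suffice, by a short Kempe-type exchange of two colours starting at $z$. The point is that changing $\psi(z)$ to any colour different from $\psi(z)$ flips the parity of one colour count in $N_G(x)$ from odd to even, so $x$ immediately satisfies the even condition, whichever admissible $c^\ast$ is chosen.

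The crux is to choose the recolouring so that no vertex loses the odd condition. The governing fact is a parity coupling: when $z$ changes colour from $p$ to $q$, every neighbour $w$ of $z$ simultaneously loses one $p$ and gains one $q$ in its neighbourhood, so a vertex $w$ whose unique odd witness was $p$ is re-endowed with the witness $q$ — unless $q$ was \emph{also} already an odd witness of $w$ — while the requirement on $z$ itself only changes from ``an odd witness $\ne p$'' to ``an odd witness $\ne q$''. As at most two colours are ever in play, only boundedly many vertices in $\{z\}\cup N_G(z)$ can be obstructions, and I would argue that some admissible move avoids all of them, using (i) the two possible values of $c^\ast$, (ii) the symmetric option of repairing $y$ instead of $x$, and (iii) if necessary recolouring a neighbour of $x$ lying outside $C$, which exists because $d_G(x)\ge 3$ and whose branch structure again limits the side effects. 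Here outerplanarity does the real work: $z$ has only its two neighbours on $C$, the rest of its neighbourhood lying in vertex-disjoint branches. I expect this bookkeeping — organised as a case analysis on $r$, on $d_G(z)$, and on which colours are odd witnesses in the few affected neighbourhoods — to be the \textbf{main obstacle}, since one must rule out the configuration where both choices of $c^\ast$, both endpoints, and the outside neighbours are simultaneously blocked.

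Finally, $v$'s even condition must be preserved. Only $z$ and the vertices of $N_G(z)$ change their neighbourhood colour profiles, so if $v\notin\{z\}\cup N_G(z)$ then $v$ is untouched and still satisfies the even condition; and because $v\notin\{x,y\}$, the freedom in (i)--(iii) lets us choose the recolouring site disjoint from $\{v\}\cup N_G(v)$, the only potential clash being when $v$ lies on or adjacent to $C$. That configuration is rigid enough — by the branch structure of $C$ — that either $d_G(v)\le 2$, so $v$ is safe automatically, or the symmetric repair at $y$ can be used instead. Combining the cases yields the required $\varphi$.
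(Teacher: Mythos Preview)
Your proposal has a genuine structural error and an unresolved gap.

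\textbf{The decomposition is wrong.} You assert that the inner face $C$ adjacent to $xy$ has no chord and that ``$G$ is obtained from $C$ by attaching pairwise vertex-disjoint branches at single vertices of $C$.'' The second claim is false in general. Take the hexagon $1\text{--}2\text{--}3\text{--}4\text{--}5\text{--}6$ with chords $2\text{--}6$ and $3\text{--}5$, and let $xy=12$. The inner face on the other side of $xy$ is the triangle $C=1\text{--}2\text{--}6$, but the remaining piece on $\{3,4,5\}$ is attached to $C$ at \emph{two} vertices, $2$ and $6$, not one. Consequently your neighbour $u_1$ of $x$ on $C$ need not have its non-$C$ neighbourhood confined to pendant branches; it can have arbitrarily many neighbours inside a $2$-connected piece that also meets $y$'s side, so recolouring $u_1$ can propagate in ways your bookkeeping does not control.

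\textbf{The recolouring argument is incomplete.} Even granting some decomposition, you do not actually carry out the case analysis you identify as the main obstacle; you only list options (i)--(iii) and say you ``expect'' one of them to work. When a single recolouring of $z$ is blocked (which happens whenever all three non-$\psi(z)$ colours occur on $N_G(z)$), you fall back to ``a short Kempe-type exchange starting at $z$'', but a Kempe chain in an outerplanar graph can be long and may well reach $v$ or vertices whose unique odd witness is destroyed. Nothing in your outline bounds this.

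The paper's argument avoids all of this by working globally: since $xy$ lies on the outer face and is not a cut edge, $G-xy$ has a cut vertex $z$ separating $x$ from $y$ (else two internally disjoint $x$--$y$ paths in $G-xy$ together with $xy$ yield a $K_{2,3}$-subdivision). One then swaps two entire colour classes on the $x$-side $G_x$ (or the $y$-side), which keeps the colouring proper and preserves every odd/even condition except possibly at $z$; two choices of the swapped pair guarantee that $z$ survives as well, and the parity at $x$ (or $y$) is fixed because exactly one neighbour, namely $y$ (or $x$), lies outside the swapped part. This is the missing idea: swap colours on a whole half of $G-xy$, not locally around one vertex.
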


\begin{proof}
    Suppose that $\varphi_0$ is an odd 4-coloring of $G$ such that $v$ satisfies the even condition.
    If $v\in \{x, y\}$, then there is nothing to prove, so assume that $v\notin \{x, y\}$.
    Without loss of generality, we may assume that $\varphi_0(x)=1$ and $\varphi_0(y)=2$.
    If either $x$ or $y$ satisfies the even condition with respect to $\varphi_0$, then we are done.
    We assume that none of $x$ and $y$ satisfies the even condition with respect to $\varphi_0$.
    Let $z$ be a cut vertex of $G-xy$ which separates $x$ and $y$.
    Let $G_x$ be a subgraph induced by the union of $\{z\}$ and the vertices of the component of $(G-xy)-z$ containing $x$, and let $G_y=G-(V(G_x)\setminus\{z\})$.
    Depending on the color of $z$, we consider the following two cases.

    \vspace{\baselineskip}
    \noindent
    \textbf{Case 1.} $\varphi_0(z)\in\{1,2\}$.

    Without loss of generality, we may assume that $\varphi_0(z)=1$.
    Let $\varphi_1$ be obtained from $\varphi_0$ by exchanging colors 2 and 3 in $G_x$; and 
    $\varphi_2$ be obtained from $\varphi_0$ by exchanging colors 2 and 4 in $G_x$.
    Then both $\varphi_1$ and $\varphi_2$ are proper colorings of $G$, and 
    the odd condition and the even condition at every vertex other than $x$ and $z$ are preserved in both colorings.
    We now consider the case $z=v$. (The case $z\neq v$ is easier.)
    When $d_G(z)$ is even, then $z$ satisfies the even condition with respect to both $\varphi_1$ and $\varphi_2$. 
    Indeed, if $z$ does not satisfy the even condition with respect to $\varphi_i$, then $|\varphi_i^-(2)\cap N_G(z)|$, $|\varphi_i^-(3)\cap N_G(z)|$ and $|\varphi_i^-(4)\cap N_G(z)|$ are all odd and thus $d_G(z)$ must be odd.
    Suppose $z$ does not satisfy the odd condition with respect to $\varphi_1$.
    Then each of colors 2 and 3 appears at an odd number of neighbors of $z$ with respect to $\varphi_0$, and both of them turn into even after the exchange of colors 2 and 3 in $G_x$.
    Hence color 3 appears at an odd number of neighbours of $z$ with respect to $\varphi_2$, and thus $z$ satisfies the odd condition with respect to $\varphi_2$.
    Similarly, when $d_G(z)$ is odd, $z$ satisfies the odd condition with respect to both $\varphi_1$ and $\varphi_2$, and satisfies the even condition with respect to at least one of $\varphi_1$ and $\varphi_2$.
    In any case, $z$ satisfies both the odd condition and the even condition with respect to at least one of $\varphi_1$ and $\varphi_2$. 
    Without loss of generality, we may assume that $z$ satisfies the odd condition and the even condition with respect to $\varphi_1$.

    Since $x$ does not satisfy the even condition with respect to $\varphi_0$, we conclude that $|\varphi_1^{-1}(3) \cap N_G(x)|=  |\varphi_0^{-1}(2) \cap N_G(x)|-1$ is even,  and  $|\varphi_1^{-1}(4) \cap N_G(x)|=  |\varphi_0^{-1}(4) \cap N_G(x)|$ is odd.
    Hence $x$ satisfies both the odd condition and the even condition with respect to $\varphi_1$. So $\varphi_1$ is a desired odd 4-coloring of $G$.

    \vspace{\baselineskip}
    \noindent
    \textbf{Case 2.} $\varphi_0(z)\notin\{1,2\}$.

    Assume $\varphi_0(z)=3$.
    Let $\varphi_3$ be obtained from $\varphi_0$ by exchanging colors 2 and 4 in $G_x$; and $\varphi_4$ be obtained from $\varphi_0$ by exchanging colors 1 and 4 in $G_y$.
    Similarly to Case 1, both $\varphi_3$ and $\varphi_4$ are proper colorings of $G$, and  at least one of $\varphi_3$ and $\varphi_4$  is an odd 4-coloring of $G$ such that the vertex $v$ satisfies the even condition.

    Since $x$ does not satisfy the even condition with respect to  $\varphi_0$, $|\varphi_3^{-1}(2) \cap N_G(x)| = |\varphi_0^{-1}(4) \cap N_G(x)|+1$ is even, and $|\varphi_3^{-1}(3)\cap N_G(x)|=|\varphi_0^{-1}(3)\cap N_G(x)|$ is odd.  
    Hence $x$ satisfies both the odd condition and the even condition with respect to $\varphi_3$.
    Similarly, the vertex $y$ satisfies both the odd condition and the even condition with respect to $\varphi_4$.
    Therefore one of $\varphi_3$ and $\varphi_4$ is a desired odd 4-coloring of $G$.
    This completes the proof of Lemma \ref{lem:colorexchange}.
\end{proof}

\section{An unavoidable set}\label{sec:unavoidable}

In this section, we define an unavoidable set of 2-connected outerplanar graphs which is needed for our proof of Theorem \ref{thm:main2}.
Let $G$ be a 2-connected outerplanar graph, and $v$ be a vertex of $G$.

\begin{itemize}
  \item An {\em ear} $H$ of $G$ is a cycle
   $(u_1, u_2, \dots , u_r)$ such that $d_G(u_i) =2$ for $i\in\{2,3,\dots, r-1\}$.  
  The edge $u_1u_r$ is  the {\em root edge} of $H$. 
  We say $H$ is \textit{good for $v$} if there exists $i \in \{1,r\}$ such that  $d_G(u_i)=3$ and $v \notin V(H)\setminus\{u_{r+1-i}\}$. 
  \item An {\em ear chain} $H$ is a sequence of ears $H_1,H_2,\dots, H_{s-1}$  ($s \geq 3$) such that the root edges of the ears form a cycle $(v_1, v_2, \dots , v_s)$ (the root edge of $H_i$ is $v_iv_{i+1}$) and  $d_G(v_i) = 4$ for $i\in\{2,3,\dots, s-1\}$. The edge $v_1v_s$ is the {\em root edge} of $H$.
  We say  $H$ is {\em good for $v$} if there exists $i \in \{1,s\}$ such that  $d_G(v_i) \le 5$ and $v \notin V(H)\setminus\{v_{s+1-i}\}$.
  
  \item An {\em ear double chain} $H$  consists of  a sequence of ear chains $H_1,H_2,\dots, H_{t-1}$ ($t \geq 3$)  whose root edges form a cycle $(w_1,w_2, \dots, w_t)$ (the root edge of $H_i$ is $w_iw_{i+1}$) and $d_G(w_i)=6$ for $i\in\{2,3,\dots, t-1\}$. The edge $w_1w_t$ is the {\em root edge} of $H$.    We say $H$ is {\em good for $v$} if  $v \notin V(H)\setminus\{w_1, w_t\}$.
  
\end{itemize}

\begin{lemma}\label{lem:unavoidable}
    Let $G$ be a 2-connected outerplanar graph with at least 4 vertices.
    Let $v$ be a vertex of $G$.
    If $G$ is not a cycle, then $G$ contains an ear,
or an ear chain,
or an ear double chain that is  good for $v$.
\end{lemma}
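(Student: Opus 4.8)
\textbf{Proof plan for Lemma~\ref{lem:unavoidable}.}

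The plan is to work with the weak dual of the outerplane graph $G$, that is, the tree $T$ whose vertices are the bounded faces of $G$ and whose edges join faces sharing an internal chord. Since $G$ is $2$-connected, outerplanar, and not a cycle, $T$ has at least two vertices, so $T$ has at least two leaves; a leaf of $T$ corresponds to a bounded face $F$ of $G$ with exactly one chord $e$ on its boundary, and the rest of $\partial F$ is a path of degree-$2$ vertices — in other words $F$ is (the interior of) an ear with root edge $e$. First I would fix a leaf face $F_0$ of $T$ whose chord-side avoids $v$: concretely, root $T$ at a face incident with $v$ (or at an arbitrary face if $v$ lies on no chord), and choose $F_0$ to be a deepest leaf; then all vertices of the ear $H_0$ corresponding to $F_0$, except possibly the two endpoints of its root edge, lie ``below'' the chord and hence are $\ne v$, except possibly one of the two root vertices. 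This is where the clause ``$v\notin V(H)\setminus\{u_{r+1-i}\}$'' in the definition of \emph{good for $v$} comes from: at most one root vertex of the chosen ear can be $v$, and we will arrange it to be the ``wrong'' one.

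The core of the argument is a degree-escalation / minimal-counterexample step. Suppose for contradiction that no ear is good for $v$. Then for every ear $H=(u_1,\dots,u_r)$ in $G$ we have: whenever $d_G(u_i)=3$ for some $i\in\{1,r\}$, the forbidden vertex $v$ lies in $V(H)\setminus\{u_{r+1-i}\}$. Since $v$ is a single vertex, this forces all but at most one ear of $G$ to have both root vertices of degree $\ge 4$ (a degree-$3$ root vertex of an ear is shared with exactly one other bounded face; a degree-$2$ one cannot be a root vertex of an ear by $2$-connectivity and the ``not a cycle'' hypothesis). Now consider a leaf face $F_0$ avoiding $v$ in the above sense together with the neighbouring faces in $T$: the ears at $F_0$ and its siblings under a common parent face have their root edges forming the boundary of that parent face, and each interior root vertex along it has degree exactly $4$. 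If the parent face has a chord-side avoiding $v$ and one of its two ``outer'' root vertices has degree $\le 5$, we get an ear chain good for $v$; if not, the obstruction pushes the relevant degree up to $6$ and we look one level higher in $T$, where two ear chains whose root edges bound a common face with interior degree-$6$ vertices yield an ear double chain good for $v$. Because $T$ is finite and every step strictly increases a degree that is bounded by the number of faces, this process terminates, and termination is exactly the production of one of the three good configurations.

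Concretely I would carry out the steps in this order: (1) set up the weak dual tree $T$ and record the correspondence ``leaf of $T$ $\leftrightarrow$ ear''; (2) prove the ``localization'' claim that we may pick a leaf ear all of whose non-root vertices avoid $v$ and at most one of whose root vertices is $v$ — and moreover that we can choose which root vertex that is, so the ``$i\in\{1,r\}$'' choice in goodness is genuinely free; (3) do a case analysis on the degrees of the two root vertices of this leaf ear: if one of them is $3$ (on the correct side), we are done with an ear; (4) otherwise both are $\ge 4$, pass to the parent face $F$ of $F_0$ in $T$, observe its boundary is a cycle through the root edges of $F_0$ and its siblings with all interior root vertices of degree $4$, giving a candidate ear chain, and check its goodness reduces to an outer root vertex of $F$ having degree $\le 5$ with $v$ on the correct side; (5) if that fails, the degrees along $\partial F$'s two ends are forced up to $\ge 6$, and passing to the grandparent face produces an ear double chain, whose goodness condition only requires $v\notin V(H)\setminus\{w_1,w_t\}$, which is automatic since all of $V(H)$ except the two root vertices lies strictly below a chord that we chose to avoid $v$. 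The main obstacle I anticipate is bookkeeping the ``$v$ on the correct side'' condition through the three levels simultaneously: one has to make sure that the single bad vertex $v$ cannot block all three potential configurations at once, and the cleanest way to guarantee this is to root $T$ at a face containing $v$ from the outset, so that ``going deeper in $T$'' always means ``moving away from $v$'', and then the only vertices that can ever equal $v$ are the root vertices $u_1$ or $w_1$ nearest the root of $T$ — which the definition of \emph{good for $v$} explicitly permits.
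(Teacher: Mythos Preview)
Your weak-dual strategy is different from the paper's, and the overall intuition (leaf ear $\to$ parent face $\to$ grandparent face, escalating degrees) is reasonable. However, step~(4) contains a genuine gap. You assert that the boundary of the parent face $F$ of your deepest leaf $F_0$ ``is a cycle through the root edges of $F_0$ and its siblings with all interior root vertices of degree $4$''. This fails when $\partial F$ contains outer-cycle edges: then some vertices of $\partial F$ have degree $2$ or $3$, not $4$, and the edges of $\partial F$ are not all root edges of ears, so you do not obtain an ear chain in the sense of the definition. Choosing $F_0$ to be a deepest leaf guarantees that all siblings of $F_0$ are leaf ears, but it does \emph{not} guarantee that every edge of $\partial F$ other than the parent chord is the root edge of one of them. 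The same problem recurs one level up in step~(5). You can probably repair this---an outer arc on $\partial F$ adjacent to a child ear forces a degree-$3$ root on that ear---but this requires additional casework you have not indicated, and the interaction with the location of $v$ (which could sit exactly at such a degree-$3$ vertex when $F$ is the root face) is delicate.

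A second, smaller issue: your claim that rooting $T$ at a face through $v$ forces $v$ to be ``the root vertex $u_1$ nearest the root of $T$'' is not well-defined. Both endpoints of the chord between $F_0$ and $F$ lie on $\partial F$; nothing distinguishes one as ``nearer'' to the root face. What is true is that if $v$ lies on $\partial F_0$ at all, then the fan of faces around $v$ is a path in $T$ containing the entire $R$--$F_0$ path, so $v$ lies on every chord along that path---but this does not single out one endpoint of the $F_0$--$F$ chord.

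For comparison, the paper sidesteps these difficulties by never using the weak dual. Instead it looks at the subgraph $T$ of $G$ induced by the chords that are root edges of ears (respectively, the subgraph $T'$ induced by root edges of ear chains). Each such subgraph is automatically a cycle or a disjoint union of paths, and a leaf of such a path yields a degree-$3$ (respectively degree $\le 5$) root vertex directly; outer-cycle edges never enter the picture. When some chord is neither an ear root nor an ear-chain root, the paper passes to a minimal subregion and reruns the argument there. This is cleaner precisely because it works with the chords themselves rather than the faces they bound.
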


\begin{proof}
    Let $F$ be the outercycle of $G$.
    We consider the following three cases.

    \vspace{\baselineskip}
    \noindent
    \textbf{Case 1.} Every chord of $F$ is the root edge of some ear.

    Let $T$ be the subgraph of $G$ induced by chords of $F$.
    By our assumption, $T$ is a cycle or a union of paths.
    Suppose first that $T$ is a cycle $(v_1, v_2, \dots , v_s)$, and let $H_i$ be an ear whose root edge is $v_iv_{i+1}$ for each $i\in [s]$ ($v_{s+1}=v_1$).
    Without loss of generality, we may assume that $v\in V(H_{s})\setminus\{v_1\}$.
    Then, the sequence of ears $H_1, H_2, \dots , H_{s-1}$ forms an ear chain with the root edge $v_1v_s$.
    Since $d_G(v_1)=4$ and $v_1\neq v$, the ear chain is good for $v$.
    Hence we may assume that $T$ is a union of paths.
    Since there are at least two leaves of $T$, there is an edge $v_1v_2$ of $T$ such that $d_T(v_2)=1$ and the corresponding ear $H'$ of $G$ satisfies $v\notin V(H')\setminus\{v_1\}$, and $d_G(v_2)=3$. So $H'$ is a good ear for $v$.

    \vspace{\baselineskip}
    \noindent
    \textbf{Case 2.} Every chord of $F$ is the root edge of some ear or some ear chain.

Note that if a chord $xy$ of $F$  is the root of an ear $H$, and also the root of an ear chain $H'$, then $G = H \cup H'$, and every chord of $F$ is the root of  an ear. This falls into Case 1. 
Thus we assume that 
each chord of $F$ is either the root of an ear or the root of an ear chain, but not both. 
    By Case 1, we may assume that there is a chord of $F$ which is the root of an ear chain.

    Let $T$ be the subgraph of $G$ induced by chords that are the root edges of ears, and 
    let $T'$ be the subgraph of $G$ induced by chords that are the root edges of ear chains.
    By our assumption, $T'$ is a cycle or a union of paths.
    Suppose first that $T'$ is a cycle $(w_1, w_2, \dots , w_t)$, and let $H_i$ be an ear chain whose root edge is $w_iw_{i+1}$ for each $i\in [t]$ ($w_{t+1}=w_1$).
    Without loss of generality, we may assume that $v\in V(H_k)\setminus\{w_1\}$.
    Then, the sequence of ear chain $H_1, H_2, \dots , H_{k-1}$ forms an ear double chain with the root edge $w_1w_t$, that is good for $v$. 
    
    Assume that $T'$ is a union of paths.
    There is an edge $w_1w_2$ of $T'$ such that $d_{T'}(w_2)=1$ and  the corresponding ear chain $H'$ satisfies $v\notin V(H')\setminus\{w_1\}$.
    As $d_F(w_2)=2$ and $d_T(w_2) \le 2$, we have $d_G(w_2)=d_F(w_2)+d_T(w_2)+d_{T'}(w_2)\leq 5$.
    Hence $H'$ is a good ear chain for $v$.

    \vspace{\baselineskip}
    \noindent
    \textbf{Case 3.} There is a chord of $F$ which is neither the root edge of an ear nor the root edge of an ear chain.

    For each chord $xy$ of $F$ which is neither the root edge of an ear nor the root edge of an ear chain, let $F'$ be a cycle in $F+xy$ such that $v\notin V(F')\setminus\{x, y\}$.
    We choose such chord $xy$ for which the length of $F'$ is minimum, 
    and let $H$ be the subgraph of $G$ induced by the vertices of $F'$.
    Note that $H$ is a 2-connected outerplanar graph with the outercycle $F'$.
    By the minimality of $F'$, for each edge $e\in E(H)\setminus E(F')$, $e$ is either the root edge of some ear of $G$ or the root edge of some ear chain of $G$, which is contained in $H$. 
    
    Let $T_H$ be the subgraph of $H$ induced by edges which is the root edge of some ear of $H$, and let $T'_H$ be the subgraph of $H$ induced by edges which is the root edge of some ear chain of $H$.  
    Note that an ear or an ear chain $H'$ of $H$ is an ear or an ear chain of $G$, unless $xy$ is an edge of $H'$. 
    By definitions, each of $T'_H$ and $T_H$ is a cycle or a union of paths.
    
Assume $E(T'_H) \ne \emptyset$. If $T'_H$ is a cycle, then there is an ear chain $H'$ of $H$ that contains $xy$. Let $x'y'$ be the root edge of $H'$. Then $H-(V(H')\setminus\{x',y'\})$ is an ear double chain of $G$ that is good for $v$. If $T'_H$ is an $xy$-path, then $H$ is an ear double chain good for $v$ with the root edge $xy$.  Assume $T'_H$ is neither a cycle nor an $xy$-path. Then there is an edge $w_1w_2$ of $T'_H$ such that $d_{T'_H}(w_1) =1, w_1 \notin \{x,y\}$, and $w_1w_2$ is the root edge of an ear chain $H'$ which does not contain $xy$. As $d_{T_H}(w_1) \le d_F(w_1) =2$, we know that $d_G(w_1) \le 5$. Hence $H'$ is good for $v$.

Assume $E(T'_H) = \emptyset$. 
 If $T_H$ is a cycle, then there is an ear $H'$ of $H$ which contains the edge $xy$.  Let   $x'y'$ be the root edge of $H'$. 
 Then $H - (V(H')\setminus\{x',y'\})$  is an ear chain $H''$ of $H$ with root edge $x'y'$, contrary to our assumption. 
Assume $T_H$ is not a cycle.
Since $xy$ is not the root edge of an ear chain of $G$,   $T_H$ is not an $xy$-path of $H$. Hence there is an edge $v_1v_2$ of $T_H$ such that $d_{T_H}(v_1)=1$, $v_1\notin \{x, y\}$, and $v_1v_2$ is the root edge of an ear $H'$ of $H$ which does not contain $xy$.
Then $H'$ is an ear of $G$ that is good for $v$.   
This completes the proof of Lemma \ref{lem:unavoidable}.
\end{proof}

\section{Proof of Theorem \ref{thm:main2}}\label{sec:proof}

Assume Theorem \ref{thm:main2} is not true and $(G, v)$ is a counterexample with minimum number of vertices. It is obvious that $G$ is connected and has at least 5 vertices.
Let $\mathcal{G}_{C_5}$ be the family of graphs every block of which is isomorphic to the cycle of length 5. 
By Proposition \ref{prop:c5family}, $G \not\in \mathcal{G}_{C_5}$.

First we consider the case that $G$ is not 2-connected.

Assume $G$ is not 2-connected and  $x$ is a cut vertex of $G$. Let $G_1$ and $G_2$ be connected subgraphs of $G$ such that $V(G_1)\cap V(G_2)=\{x\}$, $E(G_1)\cap E(G_2)=\emptyset$, and $E(G_1)\cup E(G_2)=E(G)$.
Since $G\notin \mathcal{G}_{C_5}$, either $G_1$ or $G_2$ does not belong to $\mathcal{G}_{C_5}$.
Without loss of generality, we may assume that $G_1\notin \mathcal{G}_{C_5}$.

We first suppose that $v=x$.
By induction hypothesis, there is an odd 4-coloring $\varphi_1$ of $G_1$ such that $v$ satisfies the even condition.
Without loss of generality, we may assume that $\varphi_1(x)=1$, $|\varphi_1^{-1}(2)\cap N_{G_1}(x)|$ and $|\varphi_1^{-1}(3)\cap N_{G_1}(x)|$ have the different parities.
By induction hypothesis and Proposition \ref{prop:c5family}, there is a proper 4-coloring $\varphi_2$ of $G_2$ such that every vertex $u\neq x$ satisfies the odd condition.
Without loss of generality, we may assume that $\varphi_2(x)=1$ and 
$|\varphi_1^{-1}(2)\cap N_{G_1}(x)|$ and $|\varphi_1^{-1}(3)\cap N_{G_1}(x)|$ have the same parity where the latter statement follows from Pigeon-Hole Principle.
We define a coloring $\varphi$ of $G$ by $\varphi(u)=\varphi_i(u)$ for $i\in\{1,2\}$. 
Since one of $|\varphi^{-1}(2)\cap N_G(x)|$ and $|\varphi^{-1}(3)\cap N_G(x)|$ is odd and the other is even, $\varphi$ is a desired odd 4-coloring of $G$, a contradiction.

Suppose that $v\in V(G_1)\setminus\{x\}$.
By induction hypothesis, there is an odd 4-coloring $\varphi_1$ of $G_1$ such that $v$ satisfies the even condition.
Without loss of generality, we may assume that $\varphi_1(x)=1$ and $|\varphi_1^{-1}(2)\cap N_{G_1}(x)|$ is odd.
By induction hypothesis and Proposition \ref{prop:c5family}, there is a proper 4-coloring $\varphi_2$ of $G_2$ such that every vertex $u\neq x$ satisfies the odd condition and the vertex $x$ satisfies the even condition with respect to $\varphi_2$.
Without loss of generality, we may assume that $\varphi_2(x)=1$ and $|\varphi_2^{-1}(2)\cap N_{G_2}(x)|$ is even.
We define a coloring $\varphi$ of $G$ by $\varphi(u)=\varphi_i(u)$ for $i\in\{1,2\}$, and it is easy to check that $\varphi$ is a desired odd 4-coloring of $G$, a contradiction.

Suppose that $v\in V(G_2)\setminus\{x\}$.
If $G_2\notin \mathcal{G}_{C_5}$, then we are done by symmetry of $G_1$ and $G_2$, so we suppose $G_2\in \mathcal{G}_{C_5}$.
As every vertex of $G_2-x$ has an even degree, $d_G(v)$ is even and thus $v$ satisfies the even condition.
By induction hypothesis, there is an odd 4-coloring $\varphi_1$ of $G_1$ such that $x$ satisfies the even condition.
Without loss of generality, we may assume that $\varphi_1(x)=1$ and $|\varphi_1^{-1}(2)\cap N_{G_1}(x)|$ is even.
By Proposition \ref{prop:c5family}, there is a proper 4-coloring $\varphi_2$ of $G_2$ such that every vertex $u\neq v$ satisfies the odd condition.
Without loss of generality, we may assume that $\varphi_2(x)=1$ and $|\varphi_2^{-1}(2)\cap N_{G_1}(x)|$ is odd.
Again  we define a coloring $\varphi$ of $G$ by $\varphi(u)=\varphi_i(u)$ for $i\in\{1,2\}$, and it is easy to check that $\varphi$ is a desired odd 4-coloring of $G$, a contradiction.
Therefore  $G$ is 2-connected.

Let $F$ be the outercycle of $G$.
If $G$ is a cycle, then it is shown by Caro et al.~\cite{Caro} that a cycle is odd 4-colorable unless its length is equal to 5, and each vertex has one color missing at its neighbors and hence satisfies the even condition.
Suppose that $F$ has at least one chord.
By Lemma \ref{lem:unavoidable}, $G$ contains an ear,  or an ear chain,  or an ear double chain that is good for $v$.

For a vertex $x\in V(G)$, we say $x$ satisfies the {\em parity condition}  if $x$ satisfies the odd condition, and in case $x=v$, then $x$ satisfies the even condition as well.

\begin{lemma}\label{lem:extension.ear}
    Suppose that $G$ has an ear $H$ with the root edge $u_1u_r$ such that $v\notin V(H)\setminus\{u_1\}$, and 
    $\varphi$ is a proper $4$-coloring of a subgraph of $G-(V(H) \setminus \{u_1,u_r\})$, in which $\{u_1, u_r\}$ and 
    all neighbors of $u_1$ in $G-(V(H)\setminus\{u_1, u_r\})$ are colored.
    Then $\varphi$ can be extended  to a proper 4-coloring of $H$ so that every vertex of $V(H)\setminus\{u_r\}$ satisfies the parity condition.
\end{lemma}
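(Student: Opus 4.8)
The plan is to reduce the statement to a constrained extension of a coloring of the path $u_1u_2\cdots u_r$ and then to color the internal vertices $u_2,\dots,u_{r-1}$ greedily, from $u_2$ towards $u_{r-1}$. First I record what the parity condition asks of the new colors. For $2\le i\le r-1$ we have $d_G(u_i)=2$, and $u_i\ne v$ since $v\notin V(H)\setminus\{u_1\}$; hence such a $u_i$ satisfies the parity condition as soon as its two neighbours $u_{i-1}$ and $u_{i+1}$ receive distinct colors. So the requirement on $u_2,\dots,u_{r-1}$ is exactly that the extension be a proper $4$-coloring of $H$ in which, in addition, $\varphi(u_{i-1})\ne\varphi(u_{i+1})$ for every $2\le i\le r-1$; equivalently, any three consecutive vertices of $u_1u_2\cdots u_r$ receive pairwise distinct colors. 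As for $u_1$, its only uncolored neighbour is $u_2$: all other neighbours of $u_1$ in $G$ are already colored by $\varphi$ (and properly, so the color $\varphi(u_1)$ is absent among them), so whether $u_1$ satisfies the parity condition is decided by the choice of $\varphi(u_2)$ alone.

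The key step is the following local claim, which I would prove by a short parity count on $N_G(u_1)$: there are at least two colors $c\in[4]\setminus\{\varphi(u_1)\}$ such that setting $\varphi(u_2)=c$ makes $u_1$ satisfy the parity condition. Let $m_i$ be the number of already-colored neighbours of $u_1$ of color $i$, so $m_{\varphi(u_1)}=0$; choosing $\varphi(u_2)=c$ turns the color counts in $N_G(u_1)$ into $m_c+1$ for $c$ and $m_i$ for $i\ne c$. A direct check shows that the odd condition at $u_1$ fails only when exactly one $m_i$ with $i\ne\varphi(u_1)$ is odd and $c$ is that color, while the even condition at $u_1$ (needed only if $u_1=v$) fails only when exactly two of the $m_i$ with $i\ne\varphi(u_1)$ are odd and $c$ is the remaining, even, color. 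Since the number of odd $m_i$ with $i\ne\varphi(u_1)$ is a fixed quantity, these two bad situations cannot both occur, and each of them excludes at most one value of $c$; hence at least two of the three colors in $[4]\setminus\{\varphi(u_1)\}$ are good, proving the claim.

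It remains to complete the extension using the claim. If $r=3$, the only internal vertex is $u_2$, whose neighbours $u_1,u_r$ are already colored differently (they span the root edge), so $u_2$ satisfies the parity condition automatically; among the at least two good colors for $u_2$, at least one also differs from $\varphi(u_r)$, and coloring $u_2$ with it completes a proper extension. If $r\ge 4$, color $u_2$ with a good color that, in the case $r=4$, additionally avoids $\varphi(u_r)$ — this extra constraint is exactly $\varphi(u_2)\ne\varphi(u_4)$, the ``three consecutive'' condition at $u_3$, and at least one good color survives it. Then color $u_3,u_4,\dots,u_{r-1}$ in this order. When coloring $u_j$, the forbidden colors are $\varphi(u_{j-1})$ (properness), $\varphi(u_{j-2})$ (the three-consecutive condition at $u_{j-1}$), and, only when $j\in\{r-2,r-1\}$, also $\varphi(u_r)$ — coming from properness $\varphi(u_{r-1})\ne\varphi(u_r)$ at the last step, and from $\varphi(u_{r-2})\ne\varphi(u_r)$ (the condition at $u_{r-1}$) at the second-to-last step. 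In each step at most three of the four colors are excluded, so one color remains; the resulting coloring is the desired extension.

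The step I expect to require the most care is bookkeeping: pinning down precisely which inequalities the ``three consecutive vertices distinct'' requirement imposes — notably that the condition at $u_{r-1}$ constrains the color of $u_{r-2}$ rather than of $u_{r-1}$, and that for $r\in\{3,4\}$ it either coincides with the already-satisfied root edge or forces $\varphi(u_2)\ne\varphi(u_r)$ — together with the parity count at $u_1$. Once these are settled, the greedy extension never exhausts the palette, since at most three colors are ever blocked at a vertex, and the local claim provides the necessary slack at $u_2$.
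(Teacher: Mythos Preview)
Your proof is correct and follows essentially the same greedy strategy as the paper: color $u_2,\dots,u_{r-1}$ in order, at each step ensuring the previous vertex satisfies the parity condition and, near the end, additionally avoiding $\varphi(u_r)$. The paper's write-up is terser and leaves the feasibility of each step (in particular the parity count at $u_1$ showing two admissible colors for $u_2$, and the small-$r$ bookkeeping) to the reader, whereas you spell these out explicitly; but the underlying argument is the same.
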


\begin{proof}
    Let $H$ be an ear with vertices $V(H)=\{u_1, u_2, \dots , u_r\}$ appearing in this order along $F$.
    We color vertices  $\{u_2, u_3, \dots , u_{r-1}\}$ in the ascending order of indices.
    For each $i\in\{2, 3, \dots , r-3\}$, let $\varphi(u_i) \in [4]\setminus\{\varphi(u_{i-1})\}$  so that $u_{i-1}$ satisfies the parity condition with respect to $\varphi$.
    For each $i\in\{r-2, r-1\}$, we choose a color in $[4]\setminus\{\varphi(u_{i-1}), \varphi(u_r)\}$ as $\varphi(u_i)$ so that $u_{i-1}$ satisfies the parity condition with $\varphi$.
    Then $\varphi$ is a desired coloring.
\end{proof}

\begin{lemma}\label{lem:extension.earchain}
    Suppose that $G$ has an ear chain $H$ with the root edge $v_1v_s$ such that $v\notin V(H)\setminus\{v_1\}$, 
    and $\varphi$ is a proper $4$-coloring of a subgraph of $G-(V(H) \setminus \{u_1,u_r\})$, in which $\{u_1, u_r\}$ and 
    all neighbors of $u_1$ in $G-(V(H)\setminus\{u_1, u_r\})$ are colored.
    Then  $\varphi$ can be extended to a proper 4-coloring of $H$ so that every vertex of $V(H)\setminus\{v_s\}$ satisfies the parity condition.
\end{lemma}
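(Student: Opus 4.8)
The plan is to reduce Lemma~\ref{lem:extension.earchain} to the single-ear case of Lemma~\ref{lem:extension.ear}, by colouring the constituent ears $H_1, H_2, \dots, H_{s-1}$ of $H$ one at a time, in this order. Recall from the definition that the root edges $v_1v_2, v_2v_3, \dots, v_{s-1}v_s$ of the ears, together with the root edge $v_1v_s$ of $H$, form the cycle $v_1v_2\cdots v_sv_1$; that $d_G(v_i)=4$ for $2\le i\le s-1$; and that each ear is a cycle of length at least $3$, so each $H_i$ has at least one internal vertex. In particular, for $2\le i\le s-1$ the neighbours of $v_i$ are $v_{i-1}$, $v_{i+1}$, and one internal vertex of each of $H_{i-1}$ and $H_i$; and apart from its single neighbour internal to $H_1$, every neighbour of $v_1$ is either $v_2$ or lies in $G-(V(H)\setminus\{v_1,v_s\})$ and is hence already coloured by the given $\varphi$.

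First I would extend $\varphi$ along the ``backbone'': pick colours $\varphi(v_2), \varphi(v_3), \dots, \varphi(v_{s-1})$ greedily so that $\varphi(v_i)\neq\varphi(v_{i-1})$ for $2\le i\le s-1$ and $\varphi(v_{s-1})\neq\varphi(v_s)$; this is possible since we have four colours and $s\ge 3$. Now $v_1, v_2, \dots, v_s$ are all coloured, no internal ear vertex is coloured, and the partial colouring is proper.

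Next, for $i=1, 2, \dots, s-1$ in turn, I would apply Lemma~\ref{lem:extension.ear} to the ear $H_i$, with $v_i$ and $v_{i+1}$ playing the roles of $u_1$ and $u_r$. Its hypotheses hold at the start of step $i$: the current $\varphi$ is proper, it has coloured no internal vertex of $H_i$ (these are coloured only at step $i$), and it has already coloured $v_i$, $v_{i+1}$, and every neighbour of $v_i$ lying outside $H_i$. Indeed, for $i\ge 2$ those outside neighbours are $v_{i-1}$ (coloured in the backbone step, or equal to $v_1$ and coloured by the given $\varphi$) and the internal vertex of $H_{i-1}$ incident with $v_i$ (coloured at step $i-1$), while for $i=1$ they are $v_2$, $v_s$, and any further neighbours of $v_1$, all of which lie in $G-(V(H)\setminus\{v_1,v_s\})$ or equal $v_2$ and so are coloured. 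Moreover $v\notin V(H_i)\setminus\{v_i\}$, since either $v=v_1$ or $v\notin V(H)$. Hence Lemma~\ref{lem:extension.ear} extends $\varphi$ to the internal vertices of $H_i$ so that every vertex of $V(H_i)\setminus\{v_{i+1}\}$ --- namely $v_i$ together with all internal vertices of $H_i$ --- satisfies the parity condition.

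Each step only colours previously uncoloured vertices, and once step $i$ is complete, every neighbour of $v_i$ and of every internal vertex of $H_i$ is coloured and is never recoloured; thus the parity conditions secured at step $i$ persist. After step $s-1$ the result is a proper $4$-colouring of $H$ extending $\varphi$, and the vertices known to satisfy the parity condition form exactly $\bigcup_{i=1}^{s-1}(V(H_i)\setminus\{v_{i+1}\}) = V(H)\setminus\{v_s\}$, as required. The step demanding real care --- and where I expect the bulk of the argument --- is the bookkeeping just sketched: verifying that at the start of step $i$ every neighbour of $v_i$ outside $H_i$ is already coloured, which is exactly why the ears must be taken in the order $H_1, \dots, H_{s-1}$ beginning at the root vertex $v_1$, and why it is $v_s$, whose parity condition is never required, that is left unconstrained. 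The subcase $v=v_1$, in which $v_1$ must also satisfy the even condition, needs no extra work: it is absorbed into the application of Lemma~\ref{lem:extension.ear} to $H_1$, which delivers the full parity condition at $v_1$.
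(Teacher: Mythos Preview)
Your proposal is correct and follows exactly the paper's approach: first colour the backbone $v_2,\dots,v_{s-1}$ properly, then apply Lemma~\ref{lem:extension.ear} to the ears $H_1,\dots,H_{s-1}$ in order. The paper's proof is a two-sentence version of precisely this argument, and your added bookkeeping (why the hypotheses of Lemma~\ref{lem:extension.ear} hold at each step, and why the parity conditions persist) is accurate.
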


\begin{proof}
    Let $H$ be an ear chain of $G$ consists of the sequence of ears $H_1, H_2, \dots , H_{s-1}$, and let $v_iv_{i+1}$ be the root edge of $H_i$ for each $i\in [s-1]$.
    We first color $\{v_i\mid 2\leq i\leq s-1\}$ properly as $\varphi$.
    Using Lemma \ref{lem:extension.ear} to each ear $H_i$ in the ascending order of indices, we obtain a coloring $\varphi$ of $H$ such that every vertex of $V(H)\setminus\{v_s\}$ satisfies the parity condition.
\end{proof}

Now we show that all of unavoidable structures are reducible in odd 4-coloring.

\vspace{\baselineskip}
\noindent
\textbf{Case 1.} $G$ contains an ear $H$ good for $v$.

Let $H$ be an ear good for $v$ with the root edge $u_1u_r$, and let $G'=G-(V(H)\setminus\{u_1, u_r\})$.
We define a proper 4-coloring $\varphi'$ of $G'$ as follows:
If  $G'\simeq C_5$, then let $\varphi'$ be a proper 4-coloring of $G'$ such that every vertex $x \ne u_1$  satisfies the odd condition.
Otherwise, let $\varphi'$ be an odd 4-coloring of $G'$ such that the vertex $v$ satisfies the even condition.
Note that if $G'\simeq C_5$, then every vertex of $G'$ satisfies the even condition with respect to $\varphi'$. In particular, 
$v$ satisfies the even condition.
Let $\varphi(x)=\varphi'(x)$ for every $x\in V(G')$.
By Lemma \ref{lem:extension.ear}, we extend $\varphi$ to a proper 4-coloring of $G$ such that every vertex of $V(H)\setminus\{u_r\}$ satisfies the parity condition.
Since $d_G(u_r)=3$ and $u_r\neq v$, $u_r$ satisfies the parity condition as well, and thus $\varphi$ is a desired odd 4-coloring of $G$, a contradiction.\\

Before we go to the cases that $G$ contains either an ear chain or an ear double chain good for $v$, we consider the case that $G$ contains an ear with more than 5 vertices.

\vspace{\baselineskip}
\noindent
\textbf{Case 2.} $G$ contains an ear $H$ with the root edge $u_1u_r$ such that $v\notin V(H)\setminus\{u_1, u_r\}$ and $|V(H)|\geq 6$.

Let $V(H)=\{u_1, u_2, \dots , u_r\}$ appearing in this order along $F$, and let $G'=G-\{u_2, u_3, \dots , u_{r-1}\}$.
If $G'\simeq C_5$, then we have $d_G(u_r)=3$ and $G$ admits a desired odd coloring by Case 1. 
Hence we may assume that $G'\not\simeq C_5$.
By induction hypothesis, there is an odd 4-coloring $\varphi'$ of $G'$ such that $v$ satisfies the even condition.
Without loss of generality, we may assume that $\varphi'(u_r)=1$ and 
$|\varphi'^{-1}(2)\cap N_{G'}(u_r)|$ is odd.
Let $\varphi(x)=\varphi'(x)$ for every $x\in V(G')$, and let $\varphi(u_{r-2})=2$.
We choose colors for $\{u_2, u_3, \dots , u_{r-3}, u_{r-1}\}$ in the ascending order of indices.
For each $i\in\{2, 3, \dots , r-5\}$, we choose a color in $[4]\setminus\{\varphi(u_{i-1})\}$ as $\varphi(u_i)$ so that $u_{i-1}$ satisfies the parity condition.
For each $i\in\{r-4, r-3\}$, we choose a color in $[4]\setminus\{2, \varphi(u_{i-1})\}$ as $\varphi(u_i)$ so that $u_{i-1}$ satisfies the parity condition.
Finally, we choose a color in $\{3,4\}$ as $\varphi(u_{r-1})$ so that $u_{r-2}$ satisfies the parity condition.
By the choice of colors, $\varphi$ is a proper 4-coloring of $G$, and every vertex in $V(G)\setminus\{u_r\}$ satisfies the parity condition with respect to $\varphi$.
Furthermore, since $|\varphi^{-1}(2)\cap N_G(u_r)|=|\varphi'^{-1}(2)\cap N_{G'}(r_r)|$ is odd and $u_r\neq v$, $u_r$ satisfies the parity condition, so $\varphi$ is a desired odd 4-coloring of $G$, a contradiction.\\

In the following cases, we may assume that every ear of $G$ without $v$ in its internal vertices contains at most 5 vertices.

\vspace{\baselineskip}
\noindent
\textbf{Case 3.} $G$ contains a good ear chain $H$ with the root edge $v_1v_s$ such that $d_G(v_s)\in \{4, 5\}$.

Let $H_1, H_2, \dots , H_{s-1}$ be ears contained in $H$ where the root edge of $H_i$ is $v_iv_{i+1}$ for each $i\in [s-1]$.
If $d_G(v_s)=5$, then we derive a contradiction by applying the similar argument with in Case 1 to $H_{s-1}$.
Hence we may assume that $d_G(v_s)=4$.
Let $V(H_{s-1})=\{v_{s-1}, u_2, u_3, \dots u_{r-1}, v_s\}$ appearing in this order along $F$.
By the assumption after Case 2, we know that $r\leq 5$.

Let $G'=G-(V(H)\setminus\{v_1, v_s\})$ and we define a proper 4-coloring $\varphi'$ of $G'$ as follows:
If $G'\simeq C_5$, then let $\varphi'$ be a proper 4-coloring of $G'$ such that every vertex $x\neq v_1$ satisfies the odd condition.
Otherwise, let $\varphi'$ be an odd 4-coloring of $G'$ such that $v$ satisfies the even condition.
Without loss of generality, we may assume that $\varphi'(v_1)=1$ and $\varphi'(v_s)=2$.
Let $\varphi(x)=\varphi'(x)$ for every $x\in V(G')$.
We choose colors for $\{v_i\mid 2\leq i\leq s-1\}\cup V(H_{s-1})$ as follows.

\begin{itemize}
  \item[(a)] If $r=3$ and $s=3$, then let $\varphi(u_2)=3$ and $\varphi(v_2)=4$.
  \item[(b)] If $r=3$ and $s\geq 4$, then let $\varphi(v_{s-1})=1$, $\{\varphi(u_2), \varphi(v_{s-2})\}=\{3,4\}$ so that $v_s$ satisfies the odd condition with respect to $\varphi$, and choose colors for $\{v_i\mid 2\leq i\leq s-3\}$ properly.
  \item[(c)] If $r=4$ and $s=3$, then let $\varphi(u_3)=1$ and $\{\varphi(u_2), \varphi(v_2)\}=\{3,4\}$ so that $v_3$ satisfies the odd condition with respect to $\varphi$.
  \item[(d)] If $r=4$ and $s\geq 4$, then let $\varphi(v_{s-1})=1$, $(\varphi(u_2), \varphi(u_3), \varphi(v_{s-2}))\in\{(4,3,3), (3,4,4)\}$ so that $v_s$ satisfies the odd condition with respect to $\varphi$, and choose colors for $\{v_i\mid 2\leq i\leq s-3\}$ properly.
  \item[(e)] If $r=5$ and $s=3$, then let $\varphi(u_4)=\varphi(v_2)=3$, $\varphi(u_2)=4$, and $\varphi(u_3)=1$.
  \item[(f)] If $r=5$ and $s\geq 4$, then let $\varphi(u_4)=\varphi(v_{s-1})=3$, $\varphi(u_2)=1$, $\varphi(u_3)=\varphi(v_{s-2})=4$, and choose colors for $\{v_i\mid 2\leq i\leq s-3\}$ properly.
\end{itemize}

In any case, every vertex in $V(H_{s-1})\setminus\{v_{s-1}, v_s\}$ satisfies the odd condition with respect to $\varphi$.
Furthermore, since $d_G(v_{s-1})=4$ and 3 colors appear in the neighborhood of $v_{s-1}$, $v_{s-1}$ satisfies the odd condition with respect to $\varphi$ no matter what color appears at the neighbor of $v_{s-1}$ in $V(H_{s-2})\setminus\{v_{s-2}\}$.
In (b), (c) and (d), $v_s$ satisfies the odd condition with respect to $\varphi$ by the choice of colors.
In (a), $v_s$ satisfies the odd condition with respect to $\varphi$ since $d_G(v_s)=4$ and there are three distinct colors in the neighborhood of $v_s$.
In (e) and (f), $v_s$ satisfies the odd condition with respect to $\varphi$ since for every color $j\neq \varphi(v_s)$, $|\varphi^{-1}(j)\cap N_G(v_s)|$ and $|\varphi^{-1}(j)\cap N_{G'}(v_s)|$ have the same parity.
By applying a coloring in Lemma \ref{lem:extension.ear} to ears $H_1, H_2, \dots , H_{s-2}$ in the ascending order of indices, we extend $\varphi$ to a desired odd 4-coloring of $G$, a contradiction.

Note that the assumption that $d_G(v_s)=4$ is required only in (a), which implies the following statement.

\begin{quote}
  $(*)$ Suppose that $G$ contains an ear chain $H$ with the root edge $v_1v_s$ such that $v\notin V(H)\setminus\{v_1, v_s\}$.
  If either $H$ contains at least 3 ears, or $H$ contains an ear consists of at least 4 vertices, then $(G,v)$ is odd 4-colorable.
\end{quote}

\vspace{\baselineskip}
\noindent
\textbf{Case 4.} $G$ contains an ear double chain $H$ with the root edge $w_1w_t$ such that $v\notin V(H)\setminus\{w_1, w_t\}$.

Let $H_1, H_2, \dots , H_{t-1}$ be ear chains contained in $H$ where the root edge of $H_i$ is $w_iw_{i+1}$ for each $i\in [t-1]$.
Let $G'=G-(V(H)\setminus\{w_1, w_t\})$.
If $G'\simeq C_5$, then we have $d_G(w_1)=d_G(w_t)=5$, so one of 
$H_1$ and $H_{t-1}$ is a good ear chain for $v$ of $G$ and we are done by Case 3.
Hence we may assume that $G'\not\simeq C_5$.
By $(*)$, we may assume that $V(H_i)=\{w_i, u_i, v_i, u_i', w_{i+1}\}$ and $E(H_i)=\{w_iw_{i+1}, w_iv_i, v_iw_{i+1}, w_iu_i, u_iv_i, v_iu_i', u_i'w_{i+1}\}$ for every $i\in [t-1]$.

By induction hypothesis and Lemma \ref{lem:colorexchange}, there is an odd 4-coloring $\varphi'$ of $G'$ such that $v$ and at least one of $w_1$ and $w_t$ satisfy the even condition with respect to $\varphi'$.
By symmetry of $w_1$ and $w_t$, we may assume that $w_t$ satisfies the even condition.
Without loss of generality, we may assume that $\varphi'(w_1)=1$, $\varphi'(w_t)=2$, and $|\varphi'^{-1}(j)\cap N_{G'}(w_t)|$ and $|\varphi'^{-1}(k)\cap N_{G'}(w_t)|$ have the different parities for some $j, k\in \{1,3,4\}$.
Let $\varphi(x)=\varphi'(x)$ for every $x\in V(G')$.
Let $\varphi(u_{t-1}')=1$, $\varphi(v_{t-1})=3$, $\varphi(w_{t-1})=4$, and choose colors for $\{w_i\mid 2\leq i\leq t-2\}$ properly.
Then $w_t$ satisfies both the odd condition and the even condition with respect to $\varphi$ since $|\varphi^{-1}(j)\cap N_G(w_t)|=|\varphi'^{-1}(j)\cap N_{G'}(w_t)|+1$ and $|\varphi^{-1}(k)\cap N_G(w_t)|=|\varphi'^{-1}(k)\cap N_{G'}(w_t)|+1$ have the different parities. 
Furthermore, as $d_G(v_{t-1})=4$ and 3 colors appear in the neighborhood of $v_{t-1}$, $v_{t-1}$ satisfies the odd condition with respect to $\varphi$ no matter what color appears at $u_{t-1}$.
We can extend $\varphi$ to a desired odd 4-coloring of $G$ by applying a coloring in Lemma \ref{lem:extension.earchain} to $H_1, H_2, \dots , H_{t-2}$ in the ascending order of indices, and finally coloring $u_{t-1}$ by a color in $\{1, 2\}$ so that $w_{t-1}$ satisfies the parity condition with respect to $\varphi$, a contradiction.
This completes the proof of Case 4 and the proof of Theorem \ref{thm:main2}.

\section*{Acknowledgement}
Masaki Kashima is supported by Keio University SPRING scholarship Grant number JPMJSP2123 and  JST ERATO Grant Number JPMJER2301. Xuding Zhu is supported by Grant numbers:  NSFC 12371359, U20A2068.

\end{document}